\newtheorem{theorem}{Theorem}
\newtheorem{corollary}{Corollary}
\newtheorem{proposition}{Proposition}
\newtheorem{lemma}{Lemma}
\newtheorem{definition}{Definition}
\newtheorem*{remark}{Remark}
\begin{document}
\title{Two-dimensional volume-frozen percolation: exceptional scales}
\author{Jacob van den Berg, Pierre Nolin}
\date{CWI and VU University Amsterdam, ETH Z\"urich}
\maketitle

\begin{abstract}
We study a percolation model on the square lattice, where clusters ``freeze'' (stop growing) as soon as their volume (i.e. the number of sites they contain) gets larger than $N$, the parameter of the model. A model where clusters freeze when they reach \emph{diameter} at least $N$ was studied in \cite{BLN, Ki}. Using volume as a way to measure the size of a cluster -- instead of diameter -- leads, for large $N$, to a quite different behavior (contrary to what happens on the binary tree \cite{BKN}, where the volume model and the diameter model are ``asymptotically the same''). In particular, we show the existence of a sequence of ``exceptional'' length scales.
\end{abstract}

\section{Introduction}

\subsection{Frozen percolation}

Frozen percolation is a growth process on graphs that was first considered by Aldous \cite{Al} (motivated by sol-gel transitions), on the binary tree. It is a percolation-type process which can be described informally as follows. Let $G=(V,E)$ be a simple graph. Initially, all edges are closed, and they try to become open independently of each other. However, a connected component (cluster) of open edges is not allowed to grow forever: it stops growing as soon as it becomes infinite, which means that all edges along its boundary are then prevented from opening: we say that such a cluster ``freezes'', hence the name \emph{frozen percolation}.

Note that it is not clear at all that such a process exists. In \cite{Al}, Aldous studies the case when $G$ is the infinite binary tree, where each vertex has degree $3$ (and also the case of the planted binary tree, where all vertices have degree $3$, except the root vertex which has degree $1$). In this case, the tree structure allows for the derivation of recursion formulas, and it is shown in \cite{Al} that the frozen percolation process does exist. On the other hand, it was pointed out shortly afterwards by Benjamini and Schramm that for the square lattice $\mathbb{Z}^2$, such a process does not exist (see also Remark (i) after Theorem 1 in \cite{BT}).

A modification of the process, for which existence follows automatically from standard results, was introduced in \cite{BLN} by de Lima and the two authors. In this modified process, we stop the growth of a cluster when it reaches a certain ``size'' $N < \infty$, so that frozen percolation corresponds formally to $N = \infty$. The above mentioned non-existence result by Benjamini and Schramm motivated us to investigate what happens as $N \to \infty$. However, when $N$ is finite, one needs to make precise what ``size'' means, and in \cite{BLN}, the diameter of a cluster is used as a way to measure its size. This case was further studied by Kiss, who provides in \cite{Ki} a precise description of the process as $N \to \infty$. Roughly speaking, he shows that in a square of side length $KN$, for any fixed $K > 1$, only finitely many clusters freeze (with an exponential tail on their number), and they all do so in the near-critical window around the percolation threshold $p_c$. This implies in particular that the frozen clusters all look like near-critical percolation clusters: their density thus converges to $0$ as $N \to \infty$, as well as the probability for a given vertex to be frozen. In the final configuration, one only observes macroscopic non-frozen clusters, i.e. clusters with diameter of order $N$, but smaller than $N$.

In the case of the binary tree, it is shown in \cite{BKN} that the resulting configuration is completely different: one only observes frozen clusters (with diameter $\geq N$), and microscopic ones (with diameter $1,2,3, \ldots$), but no macroscopic non-frozen clusters. Moreover, it is explained that, on the tree, the size function does not really matter: under general conditions, the process converges to Aldous' process as $N \to \infty$. In the present paper, we go back to the case of two-dimensional lattices, and we show that measuring the size of clusters by their volume (i.e. the number of sites that they contain) leads to a behavior which is quite different from what happens in the diameter case. 

We now describe the process of interest, for some fixed parameter $N \geq 1$. We restrict ourselves to the square lattice $(\mathbb{Z}^2,\mathbb{E}^2)$, but note that our results would also hold on any two-dimensional lattice with enough symmetries, such as the triangular lattice or the honeycomb lattice. The set of vertices consists of points with integer coordinates, and two vertices $x$ and $y$ are connected by an edge (denoted by $x \sim y$) \emph{iff} $\|x-y\| = 1$, where $\|.\|$ refers to the usual Euclidean norm. We consider a collection of i.i.d. random variables $(\tau_e)_{e \in \mathbb{E}^2}$ indexed by the edges, where each $\tau_e$ is uniformly distributed on $[0,1]$. We work with a graph $G$ which is either the full lattice $\mathbb{Z}^2$, or a finite connected subgraph of it. The volume of a subset $A$ of $G$ is the number of sites that it contains, and we denote it by $|A|$. We start at time $0$ with all edges closed, and we let time increase. Each edge $e$ stays closed until time $\tau_e$, when it tries to become open: it is allowed to do so if and only if its two endpoints are in clusters of volume strictly smaller than $N$. Then, $e$ stays in the same state up to time $1$, when we can read the final configuration of the process. This process is well-defined, not only on finite subgraphs but also on $\mathbb{Z}^2$: indeed, it can be seen as a finite-range interacting particle system (the rate at which an edge becomes open only depends on the configuration within distance $N$ from that edge).

We denote by $\mathbb{P}_N^{(G)}$ the corresponding probability measure, and we drop the superscript $G$ when the graph used is clear from the context. Note also that the collection $(\tau_e)_{e \in \mathbb{E}^2}$ provides a natural coupling of the processes on different subgraphs of $\mathbb{Z}^2$: this observation is used repeatedly in our proofs.

The process could be described informally as follows. Starting from a configuration where all edges are closed, we let clusters grow as long as their volume is strictly smaller than $N$, and they stop growing (they ``freeze'') when their volume becomes at least $N$. We thus say that a given vertex $x$ is \emph{frozen} if it belongs to an open cluster with volume at least $N$ (and such a cluster is called ``frozen''). We are interested in the asymptotic behavior, as $N \to \infty$, of the probability that $0$ is frozen in the final configuration, i.e. at time $1$. We conjecture that
\begin{equation} \label{conj}
\mathbb{P}_N^{(\mathbb{Z}^2)}(\text{$0$ is frozen at time $1$}) \stackrel[n \to \infty]{}{\longrightarrow} 0.
\end{equation}
We present here some results related to this question, in the case of finite subgraphs of $\mathbb{Z}^2$.

We believe that frozen percolation provides an intriguing example of a non-monotone process, with competing effects that make it quite challenging to study. In particular, all our proofs require to follow, in some sense, the whole dynamics. The non-monotonicity appears clearly with the sequence of exceptional scales $(m_k)_{k \geq 1}$ in the results below.

\bigskip

\subsection{Statement of results}

In order to present our results, we need to introduce some more notation. We denote by $B(n) = [-n,n]^2$ the box of side length $2n$ centered at the origin (for $n \geq 0$). Our results pertain to the asymptotic behavior of
\begin{equation}\label{def:F}
F_N(n) = \mathbb{P}_N^{(B(n))}(\text{$0$ is frozen at time $1$})
\end{equation}
as $N \to \infty$, for some natural choices of $n \to \infty$ as a function of $N$.

For $p \in [0,1]$, $\mathbb{P}_p$ refers to independent bond percolation on $\mathbb{Z}^2$ with parameter $p$. It is a celebrated result of Kesten \cite{Ke_1980} that the percolation threshold is $p_c = \frac{1}{2}$ in this case, and we use the following notation for the one-arm probability at criticality:
\begin{equation}
\pi(n) = \mathbb{P}_{1/2}(0 \leadsto \partial B(n)).
\end{equation}

We are now in a position to state our main results. We start with one observation, that follows almost directly from standard results about independent percolation.

\begin{proposition} \label{prop1}
For every $C > 0$,
\begin{equation}
F_N \left( \left\lceil C \sqrt{N} \right\rceil \right) \stackrel[N \to \infty]{}{\longrightarrow} \phi(C),
\end{equation}
where $\phi(C) = \frac{1}{4C^2}$ for $C > \frac{1}{2}$, and $\phi(C) = 0$ for $C < \frac{1}{2}$.
\end{proposition}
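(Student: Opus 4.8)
The plan is to split according to whether the box $B(n)$ with $n=\lceil C\sqrt N\rceil$ can accommodate a frozen cluster at all. Since $|B(n)|=(2n+1)^2 = 4C^2N\,(1+o(1))$, in the case $C<\tfrac12$ we have $|B(n)|<N$ for all large $N$, so no cluster can ever reach volume $N$: no vertex is ever frozen and $F_N(\lceil C\sqrt N\rceil)=0$, which gives $\phi(C)=0$. From now on assume $C>\tfrac12$, so that $d:=\tfrac{1}{4C^2}\in(0,1)$.

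The central observation is that, in the coupling by the variables $(\tau_e)$, the frozen process on $B(n)$ coincides with ordinary percolation up until the first time $T$ at which some cluster attains volume $\geq N$; before $T$ no edge is ever blocked. Writing $M_p$ for the volume of the largest cluster of ordinary percolation at level $p$ in $B(n)$, the supercritical law of large numbers gives $M_p/|B(n)|\to\theta(p)$ in probability for each fixed $p>p_c$, where $\theta$ denotes the percolation density; recall that $\theta$ is continuous and strictly increasing on $(p_c,1)$ with $\theta(p_c)=0$. Define $p^\ast\in(p_c,1)$ by $\theta(p^\ast)=d$. Since $p\mapsto M_p$ is nondecreasing in the coupling and $\theta(p^\ast-\epsilon)<d<\theta(p^\ast+\epsilon)$ while $d\,|B(n)|=N(1+o(1))$, we get $M_{p^\ast-\epsilon}<N<M_{p^\ast+\epsilon}$ with probability tending to $1$, hence $p^\ast-\epsilon<T<p^\ast+\epsilon$ whp; that is, $T\to p^\ast$ in probability, and at time $T$ the (unique) giant cluster freezes with volume $N(1+o(1))$, i.e. with density tending to $d$.

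Next I would show that this giant is the \emph{only} cluster that ever freezes, so that $\{0\text{ is frozen}\}=\{0\text{ lies in the giant at time }T\}$. Once the giant freezes, the only edges that remain blocked are those joining it to its complement, so as $p\uparrow 1$ the non-frozen vertices eventually connect into exactly the connected components of the complement of the giant. Since the giant is, up to $o(|B(n)|)$ vertices, the supercritical infinite cluster at level $p^\ast$, the exponential decay of the probability of an ``empty'' box (one containing no vertex of the infinite cluster) at fixed $p^\ast>p_c$ forces every such complement component to have diameter $O(\log n)$ away from $\partial B(n)$, and size $O(n\log n)=o(N)$ even along the boundary. Hence no non-frozen component can reach volume $N$, and no second freezing occurs.

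Finally I would conclude by a sandwiching argument using $T\to p^\ast$ and the monotonicity of connections in the coupling. On the event $\{T>p^\ast-\epsilon\}$, any vertex connected to the largest cluster already at level $p^\ast-\epsilon$ lies in the giant that freezes at $T$; on the event $\{T<p^\ast+\epsilon\}$, together with the absence of a second freeze, a frozen origin must lie in the largest cluster of ordinary percolation at level $p^\ast+\epsilon$. Writing $\theta_{B(n)}(p)=\mathbb{P}_p(0\leadsto\partial B(n))\to\theta(p)$, this yields
\[
\theta_{B(n)}(p^\ast-\epsilon)-o(1)\;\leq\; F_N(\lceil C\sqrt N\rceil)\;\leq\;\theta_{B(n)}(p^\ast+\epsilon)+o(1).
\]
Letting $N\to\infty$, then $\epsilon\downarrow 0$, and invoking the continuity of $\theta$ gives $F_N(\lceil C\sqrt N\rceil)\to\theta(p^\ast)=d=\tfrac{1}{4C^2}$. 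The main obstacle is the middle step: ruling out a second frozen cluster requires genuine control of the geometry of the complement of the supercritical giant (its components must stay below volume $N$), whereas the first and last steps reduce to concentration of $M_p$ and the continuity and monotonicity of $\theta$.
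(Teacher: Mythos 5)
Your overall skeleton matches the paper's: calibrate $p^{*}$ (the paper's $\bar p$) by $\theta(p^{*})=\frac{1}{4C^{2}}$, use concentration of the largest cluster \eqref{largest_cluster} to trap the freezing time in $(p^{*}-\epsilon,p^{*}+\epsilon)$, rule out any second freeze, and sandwich $F_N$ between $\theta(p^{*}-\epsilon)$ and $\theta(p^{*}+\epsilon)$, concluding by continuity of $\theta$. Where you genuinely diverge is the mechanism for the middle step, and this is worth comparing. The paper never analyzes the complement geometry of the supercritical giant: instead it introduces the event $D_1$, consisting of a $\check p$-open radial path from $B(\sqrt[3]{N})$ to $\partial B(C\sqrt N)$ together with $\check p$-open circuits in every annulus $A(k\sqrt[3]{N},(k+1)\sqrt[3]{N})$, each such annulus having volume $<N$. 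Any cluster reaching volume $N$ after time $\check p$ must cross one of these annuli, hence contains the corresponding circuit and therefore the entire path-and-circuits network; the holes it leaves are trapped between consecutive circuits and so have volume $<N$ \emph{by construction}, killing any second freeze. This event costs only exponential decay of connection probabilities at a fixed parameter, and it also delivers the lower bound for free: on $D_1\cap D_2\cap D_3$, the event $\{0\leadsto\partial B(C\sqrt N)\text{ at level }\check p\}$ forces $0$ into the network's cluster and hence into the frozen cluster -- no identification of ``the giant'' and no uniqueness statement is ever needed.

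Your version of the middle step, as justified, has a real flaw: exponential decay of the probability that a box contains no vertex of the giant does \emph{not} bound the diameter or volume of the complement components. A hole can be thin and snake-like, weaving between giant vertices, so a giant vertex in every $\log$-box is compatible with holes of very large diameter and volume. The correct tool is planar duality: a hole is a maximal connected component of non-giant vertices, so every edge from it to its complement is closed, and an internal hole of diameter $k$ produces a dual-open circuit of diameter at least $k$, exponentially unlikely for $k\geq K\log n$ at fixed supercritical $p$ (boundary components need a separate dual-arc version). Moreover, your assertion that ``the (unique) giant cluster freezes'' -- i.e.\ that the first cluster to reach volume $N$ contains the level-$(p^{*}-\epsilon)$ giant, which is exactly what you need in order to dominate its holes by the complement components of that giant, and also what your lower-bound step uses -- is left unproved; it requires, say, concentration of the largest cluster combined with a bound on the second-largest cluster at a fixed supercritical level. (A smaller point: you conflate $\{0\leadsto\partial B(n)\}$ with $\{0$ connected to the giant$\}$; both probabilities tend to $\theta(p)$, but this deserves a line.) All of these gaps are fillable by classical supercritical facts on $\mathbb{Z}^2$, so your route can be made to work, but it invokes precisely the heavier machinery that the paper's annulus-circuit construction was designed to bypass.
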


Note that $\phi(C) \to 0$ as $C \to \infty$, which may tempt one to believe that for every function $g$ with $g(N) \gg \sqrt{N}$, one has $F_N(g(N)) \to 0$ as $N \to \infty$. However, the next theorem shows that one cannot naively exchange limits to say that in the full plane process, the probability for $0$ to be frozen converges to $0$. Indeed, $\sqrt{N}$ corresponds to a first scale $m_1(N)$ in a sequence $(m_k(N))_{k \geq 1}$ of exceptional scales, each of them leading to a non-trivial behavior. Roughly speaking, $m_2(N)$ is such that if we start with a box of this size, then a first ``giant'' cluster freezes and creates ``holes'' (here, by a hole, we mean a maximal connected component of unfrozen sites). The time at which this cluster freezes is such that the largest holes have size roughly $m_1(N)$, and most sites are in such holes. Then inside each of these holes, the process behaves similarly to the process in a box of size $m_1(N)$, so that a ``second-generation'' frozen cluster is produced. One can then define in the same way $m_3(N)$, $m_4(N)$, and so on. 

\begin{definition}
We define inductively the sequence of scales $(m_k(N))_{k \geq 0}$ by: $m_0 = 1$, and for all $k \geq 0$, $m_{k+1}=m_{k+1}(N)$ is given by
\begin{equation} \label{def_mk}
m_{k+1} = \left\lceil \left(\frac{N}{\pi(m_k)}\right)^{1/2} \right\rceil
\end{equation}
(i.e. $m_{k+1}^2 \pi(m_k) \simeq N$).
\end{definition}

Note that this definition immediately implies that
\begin{equation} \label{size_m1}
m_1(N) \sim c_0 \sqrt{N},
\end{equation}
for a certain constant $c_0>0$. Also,
\begin{lemma} \label{lemma_mk}
For every fixed $k \geq 0$, there exists $\alpha_k > 0$ such that: for $N$ large enough,
\begin{equation}
N^{\alpha_k} \leq \frac{m_{k+1}(N)}{m_k(N)} \leq N^{3^{-k}}.
\end{equation}
\end{lemma}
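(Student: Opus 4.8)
The plan is to pass to logarithms and track the single quantity $b_k := \log_N\!\big(m_{k+1}/m_k\big)$, so that the two desired inequalities become $\alpha_k \le b_k \le 3^{-k}$. First I would record the elementary monotonicity $m_0 \le m_1 \le m_2 \le \cdots$ (immediate from \eqref{def_mk}, since $\pi$ is non-increasing), which guarantees each ratio is $\ge 1$, i.e. $b_k \ge 0$. The only external input I need is the standard package of a priori arm estimates for critical percolation: quasi-multiplicativity of the one-arm probability together with the polynomial bounds on the annulus one-arm event,
\[
c\,(m/n)^{\eta_2} \;\le\; \frac{\pi(n)}{\pi(m)} \;\le\; C\,(m/n)^{\eta_1} \qquad (1 \le m \le n),
\]
valid for some constants $0 < \eta_1 \le \eta_2$ and $C \ge 1$. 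The quantitative feature I will exploit is that one may take $\eta_2 \le \tfrac{2}{3}$; this is secured by the classical a priori lower bound $\pi(n) \ge c\,n^{-1/2}$, i.e. the one-arm exponent is provably smaller than $\tfrac{2}{3}$.

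Next I would turn \eqref{def_mk} into a recursion for $b_k$. Dividing the (approximate) identities $m_{k+1}^2\,\pi(m_k) \simeq N$ and $m_k^2\,\pi(m_{k-1}) \simeq N$ — the ceilings cost only a factor $1+o(1)$ since $m_{k+1} \ge \sqrt N \to \infty$ — gives
\[
\Big(\frac{m_{k+1}}{m_k}\Big)^{2} \;\simeq\; \frac{\pi(m_{k-1})}{\pi(m_k)}.
\]
Applying the displayed arm bound with $(m,n) = (m_{k-1},m_k)$ converts the right-hand side into a power $r_{k-1}^{\,\eta}$, $\eta \in [\eta_1,\eta_2]$, of $r_{k-1} := m_k/m_{k-1}$, so that
\[
c_1\, r_{k-1}^{\,\eta_1/2} \;\le\; r_k \;\le\; c_2\, r_{k-1}^{\,\eta_2/2}.
\]
Taking $\log_N$ and writing $\epsilon_N := \log\!\big(\max(c_1^{-1},c_2)\big)/\log N \to 0$, this reads
\[
\tfrac{\eta_1}{2}\,b_{k-1} - \epsilon_N \;\le\; b_k \;\le\; \tfrac{\eta_2}{2}\,b_{k-1} + \epsilon_N,
\]
with base case $b_0 = \log_N m_1 \to \tfrac{1}{2}$ by \eqref{size_m1}.

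I would then close both inequalities by induction on $k$ (for each fixed $k$, taking $N$ large). For the upper bound, assuming $b_{k-1} \le 3^{-(k-1)}$ the recursion yields $b_k \le \tfrac{\eta_2}{2}\,3^{-(k-1)} + \epsilon_N = \tfrac{3\eta_2}{2}\,3^{-k} + \epsilon_N$; since $\tfrac{3\eta_2}{2} \le 1$ (this is exactly where $\eta_2 \le \tfrac{2}{3}$ is used) and $3^k\epsilon_N \to 0$ for fixed $k$, the right-hand side is $\le 3^{-k}$ once $N$ is large, while the base case $b_0 \to \tfrac{1}{2} \le 1$ holds. For the lower bound, iterating $b_k \ge \tfrac{\eta_1}{2}\,b_{k-1} - \epsilon_N$ from $b_0 \to \tfrac{1}{2}$ gives $b_k \ge \tfrac{1}{2}(\eta_1/2)^k - O(\epsilon_N) \ge \tfrac{1}{4}(\eta_1/2)^k$ for $N$ large, so $\alpha_k := \tfrac{1}{4}(\eta_1/2)^k > 0$ works.

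The main obstacle is not the induction, which is routine once the recursion is in place, but securing the upper bound with the specific geometric rate $3^{-k}$. Using only polynomial decay of $\pi$ (i.e. $\eta_1 > 0$) one obtains the lower bound and the positivity of the ratios for free, and using only $\eta_2 < 1$ one gets $b_k \le \tfrac{1}{2}(\eta_2/2)^k(1+o(1))$, which does tend to $0$ but can decay strictly slower than $3^{-k}$ when $\tfrac{\eta_2}{2} > \tfrac{1}{3}$. Reaching the stated rate therefore genuinely requires the a priori fact that the one-arm exponent is at most $\tfrac{2}{3}$ (comfortably met by $\pi(n) \ge c\,n^{-1/2}$); this is the one place where a nontrivial percolation estimate, rather than bookkeeping, enters. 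The remaining care — controlling the ceilings and the constants $c_1,c_2,C$ uniformly over the finitely many scales $0,1,\dots,k$ — is straightforward since $k$ is fixed and all relevant scales stay polynomial in $N$, namely $1 \le m_j \le N^{3/2+o(1)}$ (because $\sum_{j\ge 0}3^{-j} = \tfrac{3}{2}$).
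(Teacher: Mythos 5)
Your argument is correct and is essentially the paper's own proof, written out in logarithmic variables: the paper likewise inducts on $k$, divides the defining relations to get $\frac{m_{k+2}}{m_{k+1}} \sim \left(\pi(m_k)/\pi(m_{k+1})\right)^{1/2}$, and closes the induction with the a priori bounds \eqref{apriori}, whose exponents $\alpha$ and $\tfrac{1}{2}$ play exactly the roles of your $\eta_1$ and $\eta_2$. One cosmetic caveat: with the non-strict threshold $\eta_2 \le \tfrac{2}{3}$ your inductive step as written would not close at equality (the $\epsilon_N$ term pushes the bound just above $3^{-k}$ unless you exploit the slack $b_0 \to \tfrac{1}{2} < 1$), but this is moot since the BK upper bound in \eqref{apriori} gives $\eta_2 = \tfrac{1}{2} < \tfrac{2}{3}$, which is what you actually use.
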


Theorem \ref{thm1} and \ref{thm2} below show that these scales $m_k$ ($k = 2, 3, \ldots$) are indeed exceptional.

\begin{theorem} \label{thm1}
Let $k \geq 2$ be fixed. For every $C \geq 1$, every function $\tilde{m}(N)$ that satisfies
\begin{equation}
C^{-1} m_k(N) \leq \tilde{m}(N) \leq C m_k(N)
\end{equation}
for $N$ large enough, we have
\begin{equation}
\liminf_{N \to \infty} F_N(\tilde{m}(N)) > 0.
\end{equation}
\end{theorem}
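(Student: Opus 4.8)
The plan is to establish a positive lower bound on the probability that $0$ is frozen, uniformly in $N$, when we run the process in a box of side length comparable to $m_k$. I would prove this by induction on $k$, with the base case $k=1$ following from Proposition \ref{prop1} (taking $C = c_0$, which gives $\phi(c_0) > 0$, noting $c_0 > 1/2$ can be arranged). The heuristic quoted just before the theorem statement is the blueprint: in a box of size $\tilde m \asymp m_k$, with positive probability a single ``giant'' cluster freezes at a time $t_N$ close to $p_c$; at that moment the complementary unfrozen region breaks into holes, the largest of which have linear size of order $m_{k-1}$; and conditionally on this geometry, the restriction of the dynamics to such a hole looks, after rescaling, like the frozen percolation process in a box of size $m_{k-1}$. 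Applying the induction hypothesis inside one such hole then yields a frozen cluster containing $0$ with positive probability.

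The first main step is to make precise the ``first-generation freezing'' event and to control the freezing time. Using the coupling through $(\tau_e)$, I would run the process up to the time $t_N$ at which a macroscopic cluster first reaches volume $N$. The defining relation $m_k^2 \pi(m_{k-1}) \simeq N$ is exactly what calibrates this: a near-critical cluster in $B(\tilde m)$ at parameter $p$ has volume of order $(\tilde m)^2 \pi(\tilde m)$ up to the relevant correlation length, so requiring this to hit $N$ forces the correlation length at time $t_N$ to be of order $m_{k-1}$. I would invoke standard near-critical percolation estimates (arm exponents, quasi-multiplicativity of $\pi$, and Kesten's scaling relations, which the diameter-model analysis of \cite{BLN, Ki} already uses) to show that, with probability bounded away from $0$, exactly this scenario occurs: a frozen cluster appears, and the unfrozen complement contains holes of linear size $\asymp m_{k-1}$ that together cover a positive fraction of $B(\tilde m)$, with $0$ lying in such a hole with probability bounded below.

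The second, and harder, step is the \emph{spatial Markov / localization} argument that lets the induction hypothesis be applied inside a hole. The obstacle is that frozen percolation is genuinely non-monotone and non-local: the future evolution inside a hole depends on the times $\tau_e$ of edges already examined, on the frozen boundary surrounding the hole, and on whether clusters inside the hole can themselves reach volume $N$ before being blocked. The key point to verify is that a hole of size $\asymp m_{k-1}$ behaves, for the \emph{subsequent} dynamics, like a fresh box $B(m_{k-1})$: its boundary is frozen (hence inert, blocking no further openings from outside but also preventing interior clusters from merging across it), and the edges strictly inside it have conditionally i.i.d. uniform times on $[t_N, 1]$, which after the time-change behaves like a fresh copy of the process with the volume threshold still $N$. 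Because the hole has the right size $m_{k-1}$ relative to the threshold $N$, the induction hypothesis applies and produces, with probability bounded below, a second-generation frozen cluster through $0$. I expect the bulk of the technical work — and the place where the near-critical estimates must be marshaled most carefully — to be precisely this conditioning argument: controlling the dependence between the first-generation geometry and the interior times, and ruling out pathological holes (too small, too irregular, or with boundary effects) so that a positive-probability sub-population of ``good'' holes of size $\asymp m_{k-1}$ remains on which the induction can be run.
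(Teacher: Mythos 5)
Your high-level scheme is the right one, and it matches the paper's: induction on $k$, a first ``giant'' freezing at a time whose characteristic length is $\asymp m_{k-1}$ (calibrated by $m_k^2\,\pi(m_{k-1}) \simeq N$), localization of the hole containing $0$ at scale $m_{k-1}$ with probability bounded below, and recursion inside that hole. But at the decisive step — running the induction inside the hole — your proposal contains a genuine gap, and in fact a claim that is false as stated. You assert that, conditionally on the first-generation geometry, the edges strictly inside the hole have i.i.d.\ uniform times on $[t_N,1]$, so that after a time-change the hole carries ``a fresh copy of the process.'' This is not so: edges inside the hole with $\tau_e < t_N$ are already open at the freezing time, so the hole contains a near-critical configuration, not an empty one, and no time-change converts the subsequent evolution into a frozen percolation process started from all-closed. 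The correct observation (and the one the paper uses) is that, on a suitable good event, the dynamics inside the hole coincides \emph{from time $0$ onwards} with the frozen percolation process on the domain $\mathcal{D}(\tilde{\gamma})$ bounded by the hole's dual-circuit boundary, under the natural coupling through the same clocks $(\tau_e)$ — and for this one must additionally ensure (the paper's event $\Delta_2(\tilde{\gamma})$) that no cluster inside the hole reaches volume $N$ before the giant cluster freezes.

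This exposes the second, structural, problem: your induction hypothesis is the theorem itself, i.e.\ a statement about \emph{boxes}, but the hole is a random domain whose shape is correlated with the exterior configuration, so a box statement cannot be applied to it, conditionally or otherwise. The paper resolves precisely the conditioning obstacle you flag — correctly — as the bulk of the work, but by a different and essentially combinatorial device rather than a conditional-law computation: it strengthens the induction statement to Proposition \ref{prop2}, concerning the event $\Gamma_N(n_1,n_2)$ that $0$ freezes \emph{simultaneously for every} dual circuit $\gamma$ in the annulus $A(n_1,n_2)$, for the process in $\mathcal{D}(\gamma)$. This event is measurable with respect to the edge times in the interior region alone; one then decomposes over the realized hole boundary $\tilde{\gamma}$, notes that the exterior part $\Delta_1(\tilde{\gamma})$ of the good event is independent of all interior events, and the induction closes because $\Gamma_N$ covers whichever circuit $\tilde{\gamma}$ the dynamics happens to produce. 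Without this uniform-over-circuits strengthening your induction does not close, and the mechanism you propose in its place (conditional restart in the hole) fails. Two minor points: the theorem is stated only for $k \geq 2$ because the base case $k=1$ of Proposition \ref{prop2} requires the inner constant to exceed $(2c_0)^{-1}$ (your remark about $c_0 > \frac{1}{2}$ gestures at this but the restriction must be carried through the choice of $C_3$ in the induction step); and the machinery you invoke (arm exponents, Kesten's scaling relations) is not needed — the a-priori RSW bounds \eqref{apriori}, the relation \eqref{theta}, exponential decay \eqref{exp_decay}, and the largest-cluster law \eqref{largest_cluster} suffice.
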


However, we do not expect these exceptional scales to correspond to a typical situation (i.e. sizes of holes produced in the full-plane process), so that the previous result does not contradict the conjecture. Moreover, the next theorem shows that if we start away from these unusual scales, then the probability for $0$ to be frozen converges to $0$ as expected.

\begin{theorem} \label{thm2}
For every integer $k \geq 0$ and every $\epsilon>0$, there exists a constant $C=C(k,\epsilon) \geq 1$ such that: for every function $\tilde{m}(N)$ that satisfies
\begin{equation}
C m_k(N) \leq \tilde{m}(N) \leq C^{-1} m_{k+1}(N)
\end{equation}
for $N$ large enough, we have
\begin{equation}
\limsup_{N \to \infty} F_N(\tilde{m}(N)) \leq \epsilon.
\end{equation}
\end{theorem}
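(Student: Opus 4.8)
The plan is to argue by induction on $k$, following the whole dynamics and exploiting the self-similar cascade sketched in the introduction. The base case $k=0$ is essentially trivial: if $\tilde m(N) \le C^{-1} m_1(N)$ with $C$ chosen large, then since $m_1 \sim c_0\sqrt N$ the box has volume $|B(\tilde m(N))| \le (2C^{-1}m_1+1)^2 \sim 4C^{-2}c_0^2\, N$, which is $<N$ for $N$ large once $4C^{-2}c_0^2<1$. Then no cluster can ever reach volume $N$, so $F_N(\tilde m(N))=0$, and the case holds with $C=C(0,\epsilon)$ large (for any $\epsilon$). For the inductive step I assume the statement at level $k-1$, with its constant $C(k-1,\epsilon/2)$, and I prove it at level $k$ for a constant $C=C(k,\epsilon)$ to be fixed large. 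Throughout I use the standard near-critical quantities: the correlation length $L(p)$ and the density $\theta(p)$, with $\theta(p)\asymp\pi(L(p))$.

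Step 1 (first freezing and holes). Fix $\tilde m=\tilde m(N)\in[Cm_k,C^{-1}m_{k+1}]$. Before anything freezes the configuration is ordinary percolation at a parameter $p$ increasing from $0$. Let $n_\ast$ solve $n_\ast^2\pi(n_\ast)\simeq N$; since $x\mapsto x^2\pi(x)$ is increasing, one checks $m_{k+1}<n_\ast$, so $\tilde m<n_\ast$ and no cluster can reach volume $N$ while the picture is critical or subcritical — the box is too small. Hence the first cluster to freeze is the unique spanning cluster at a slightly supercritical parameter $p_\ast$, of volume $\approx\theta(p_\ast)(2\tilde m)^2$, so freezing occurs when $\pi(L(p_\ast))\approx N/(4\tilde m^2)$. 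Defining $\ell=\ell(N)$ by $\pi(\ell)\simeq N/\tilde m^2$ gives $L(p_\ast)\asymp\ell$, and the complement of the frozen spanning cluster splits into holes of diameter of order $\ell$. Using $N/m_{k+1}^2=\pi(m_k)$ and $N/m_k^2=\pi(m_{k-1})$, the constraint on $\tilde m$ yields $C^2\pi(m_k)\lesssim\pi(\ell)\lesssim C^{-2}\pi(m_{k-1})$; because $\pi$ is a power law this places $\ell$ strictly inside $(m_{k-1},m_k)$, at a distance (on the logarithmic scale) that grows with $C$. In particular, for $C$ large, $\ell\in[C(k-1,\epsilon/2)\,m_{k-1},\,C(k-1,\epsilon/2)^{-1}m_k]$, i.e. $\ell$ lies in the admissible window for level $k-1$ and is bounded away from the exceptional scale $m_{k-1}$.

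Step 2 (decomposition). The origin is frozen iff either (i) it lies in the first frozen spanning cluster, or (ii) it lies in a hole and freezes during the subsequent evolution inside that hole. For (i), the probability that $0$ is in the spanning cluster is $\approx\theta(p_\ast)\asymp\pi(\ell)\simeq N/\tilde m^2\le\pi(m_{k-1})/C^2$: for $k\ge2$ this tends to $0$, and for $k=1$ it is at most $\pi(1)/C^2$, so in all cases it is $\le\epsilon/2$ for $C$ and $N$ large. For (ii), conditionally on the freezing event the dynamics inside a hole $H$ is, through the coupling $(\tau_e)$, a frozen percolation process on $H$, whose outer boundary is sealed by the frozen cluster; comparing it with the process in a box of comparable size and invoking the induction hypothesis at level $k-1$ (valid since the hole of $0$ has diameter of order $\ell$ in the admissible range, with high probability), the conditional probability that $0$ freezes inside its hole is $\le\epsilon/2$. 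Adding the two contributions gives $F_N(\tilde m(N))\le\epsilon$, which closes the induction.

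I expect Step 1 to be the main obstacle: the rigorous near-critical control showing that the first freezing really proceeds through a single spanning cluster at the predicted parameter $p_\ast$, and that the holes have diameter concentrated at scale $\ell$, bounded away from $m_{k-1}$, with estimates uniform in $\tilde m$. This requires Kesten's scaling relations, RSW theory, arm exponents and quasi-multiplicativity, together with quantitative control of $\theta(p)$ and of the geometry of the complement of a slightly supercritical cluster — in particular ruling out that the hole containing $0$ is atypically small and lands on the exceptional scale. A secondary but genuine difficulty is the domain comparison in Step 2: since frozen percolation is \emph{not} monotone, transferring the box-statement to the randomly shaped, conditioned holes cannot rely on stochastic domination and instead demands a careful coupling and locality argument.
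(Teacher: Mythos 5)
Your multi-scale picture is the right one, and your Step 1 matches the paper's computation: with $\tilde m\in[Cm_k,C^{-1}m_{k+1}]$, the first freezing time $p_\ast$ is pinned down by $\theta(p_\ast)(2\tilde m)^2\approx N$, giving via $\theta(p)\asymp\pi(L(p))$ and the a-priori bounds a hole scale $\ell\asymp L(p_\ast)$ sandwiched strictly between $m_{k-1}$ and $m_k$, with margins growing in $C$ (one caveat: on $\mathbb{Z}^2$, $\pi$ is \emph{not} known to be a power law; the paper only uses the a-priori bounds \eqref{apriori}, which indeed suffice here, exactly as in its verification of \eqref{cond_ck}). The paper also makes your ``spanning cluster freezes at $p_\ast$'' claim rigorous in a cheaper way than your concentration statement: it defines two deterministic times $p_2\le p_1$ via \eqref{def_p2}--\eqref{def_p1} and constructs explicit events $E_1$--$E_3$ (circuits plus largest-cluster volume bounds from \eqref{largest_cluster} and \eqref{exp_decay}) forcing the first freezing into $(p_2,p_1)$, and events $E_4$--$E_5$ forcing the boundary of the hole of $0$ into the annulus $A(C_4L(p_2),2C_3L(p_2))$, each with probability $\geq 1-\epsilon$, uniformly in the ambient domain.

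The genuine gap is the one you yourself flag at the end: your induction is on the box statement $F_N$, and the inductive step requires transferring it to the randomly shaped, conditioned hole --- ``comparing it with the process in a box of comparable size'' has no justification, precisely because the process is non-monotone, and moreover the conditioning on \{the hole of $0$ equals $\mathcal{D}(\tilde\gamma)$ and froze at the right time\} is not independent of the clocks inside the hole, so you cannot insert the unconditional box bound. The paper resolves this by \emph{strengthening the inductive statement}: Proposition \ref{prop3} bounds $\mathbb{P}(\tilde\Gamma_N(n_1,n_2))$, where $\tilde\Gamma_N(n_1,n_2)=\{$there \emph{exists} a dual circuit $\gamma$ in $A(n_1,n_2)$ such that $0$ is frozen for the process in $\mathcal{D}(\gamma)\}$, all processes coupled through the same $(\tau_e)$. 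With this existential form over arbitrary circuit domains, the reduction becomes the deterministic, pathwise inclusion
\begin{equation*}
\tilde{\Gamma}_N(C_1 \tilde{m}, C_2 \tilde{m}) \cap \Bigl( \bigcap_{1 \leq i \leq 5} E_i \Bigr) \subseteq \tilde{\Gamma}_N\bigl(C_4 L(p_2),\, 2 C_3 L(p_2)\bigr),
\end{equation*}
(here the volume bounds in $E_3$ guarantee that nothing inside the hole region froze before $p_1$, so the dynamics in the hole coincides with the process in $\mathcal{D}(\tilde\gamma)$ run from time $0$ with the same clocks), and the induction hypothesis is applied directly to the deterministic scale $L(p_2(N))$ --- no conditioning, no box-to-hole comparison, no decomposition over hole shapes. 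This is exactly why the paper notes its Theorem \ref{thm2} proof is \emph{simpler} than that of Theorem \ref{thm1} (where a lower bound forces the $\Delta_1(\tilde\gamma)/\Delta_2(\tilde\gamma)$ decomposition and an independence argument). Without this strengthening of the induction hypothesis, your Step 2(ii) does not close; with it, your outline becomes essentially the paper's proof.
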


Note that this theorem implies in particular the following result.

\begin{corollary}
Let $k \geq 0$ be fixed. If the function $\tilde{m}(N)$ satisfies $m_k(N) \ll \tilde{m}(N) \ll m_{k+1}(N)$ as $N \to \infty$, then
\begin{equation}
F_N(\tilde{m}(N)) \stackrel[N \to \infty]{}{\longrightarrow} 0.
\end{equation}
\end{corollary}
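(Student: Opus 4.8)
The plan is to deduce the corollary directly from Theorem~\ref{thm2}, exploiting the fact that the constant $C$ there may be taken as large as we wish (at the cost of shrinking $\epsilon$), and then letting $\epsilon$ tend to $0$. Since $F_N(\tilde m(N))$ is a probability, hence nonnegative, it suffices to prove that $\limsup_{N \to \infty} F_N(\tilde{m}(N)) \leq \epsilon$ for every fixed $\epsilon > 0$; combining this with $F_N(\tilde m(N)) \geq 0$ will then force the $\limsup$ to be $0$, which is exactly the desired convergence.

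So I would fix an arbitrary $\epsilon > 0$ and let $C = C(k,\epsilon) \geq 1$ be the constant supplied by Theorem~\ref{thm2}. The whole point is then to check that the hypotheses $m_k(N) \ll \tilde m(N) \ll m_{k+1}(N)$ of the corollary are strong enough to guarantee the two-sided bound required by Theorem~\ref{thm2}, \emph{for this particular $C$}. Indeed, $m_k(N) \ll \tilde{m}(N)$ means precisely that $\tilde{m}(N)/m_k(N) \to \infty$, so since $C$ is a fixed finite number, we have $\tilde{m}(N) \geq C\, m_k(N)$ for all $N$ large enough. Symmetrically, $\tilde{m}(N) \ll m_{k+1}(N)$ means $\tilde{m}(N)/m_{k+1}(N) \to 0$, whence $\tilde{m}(N) \leq C^{-1} m_{k+1}(N)$ for all $N$ large enough. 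Thus $\tilde m$ satisfies
\[
C\, m_k(N) \leq \tilde{m}(N) \leq C^{-1} m_{k+1}(N)
\]
for $N$ large, and Theorem~\ref{thm2} applies to give $\limsup_{N \to \infty} F_N(\tilde{m}(N)) \leq \epsilon$.

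Letting $\epsilon \downarrow 0$ then yields $\limsup_{N \to \infty} F_N(\tilde{m}(N)) \leq 0$, and together with nonnegativity this completes the argument. I do not expect any genuine obstacle here: this is a routine limiting deduction, and all the substantive content is already carried by Theorem~\ref{thm2}. The only subtlety worth flagging is that the strict domination relations $\ll$ absorb any fixed multiplicative constant $C$ precisely because $C$ is chosen \emph{before} sending $N \to \infty$; were the order of quantifiers reversed (a constant depending on $N$), the implication would fail, but as stated the argument goes through verbatim.
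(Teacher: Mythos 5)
Your proof is correct and is exactly the deduction the paper intends: the authors state the corollary as an immediate consequence of Theorem~\ref{thm2} without writing out details, and your argument (fix $\epsilon$, take the constant $C(k,\epsilon)$ from Theorem~\ref{thm2}, note that $m_k \ll \tilde m \ll m_{k+1}$ absorbs the fixed constant $C$ for $N$ large, then let $\epsilon \downarrow 0$) is precisely the routine quantifier check being omitted. Your remark about the order of quantifiers, namely that $C$ is fixed before $N \to \infty$, correctly identifies the only point where care is needed.
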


\bigskip

\section{Percolation preliminaries}

\subsection{Notations}

We first introduce some standard notation from percolation theory (for a more detailed account, we refer the reader to \cite{Gr,No_2008}). Two sets of vertices $A$ and $B$ are said to be connected, which we denote by $A \leadsto B$, if there exists an open path from some vertex of $A$ to some vertex of $B$. For a vertex $v$, $v \leadsto \infty$ means that $v$ lies in an infinite connected component, and we use
\begin{equation}
\theta(p) = \mathbb{P}_p(0 \leadsto \infty),
\end{equation}
which can also be seen as the density of the (unique) infinite cluster.

For a rectangle on the lattice of the form $R=[x_1,x_2] \times [y_1,y_2]$, we denote by $\mathcal{C}_V(R)$ (resp. $\mathcal{C}_H(R)$) the existence of a vertical (resp. horizontal) crossing, and we denote by $L(p) = L_{1/4}(p)$ the usual characteristic length defined in terms of crossings of rectangles:
\begin{equation}
\text{for all $p>\frac{1}{2}$,} \quad L_{1/4}(p) = \inf \left\{n \geq 1 \: : \: \mathbb{P}_p \left(\mathcal{C}_H([0,2n] \times [0,n]) \right) \geq \frac{3}{4} \right\},
\end{equation}
and $L_{1/4}(p) = L_{1/4}(1-p)$ for $p<\frac{1}{2}$.

We work with the dual graph of $\mathbb{Z}^2$, which can be seen as $(\frac{1}{2},\frac{1}{2}) + \mathbb{Z}^2$. We adopt the convention that a dual edge $e^*$ is open \emph{iff} the corresponding primal edge $e$ is closed (and we talk about dual-open and dual-closed paths).

We also denote by $A(n,n') = B(n') \setminus B(n)$ the annulus of radii $0 \leq n < n'$. Finally, if $\gamma$ is a circuit (i.e. a path whose vertices are all distinct, except the starting point and the end point, which coincide) on the dual lattice, we denote by $\mathcal{D}(\gamma)$ the domain (subgraph of $\mathbb{Z}^2$) obtained by considering the vertices (on the original lattice) which lie in the interior of $\gamma$, and keeping only the edges that connect two such vertices.

\bigskip

\subsection{Classical results}

We now collect useful results about independent percolation, which are needed for the proofs.
\begin{enumerate}[(i)]
\item Uniformly over $p>\frac{1}{2}$,
\begin{equation} \label{theta}
\theta(p) \asymp \pi(L(p))
\end{equation}
(where $\asymp$ means that the ratio between the two sides is bounded away from $0$ and $\infty$). This result is Theorem 2 in \cite{Ke_1987} (see also Corollary 41 in \cite{No_2008}, and the remark just below it).

\item There exist $\lambda_1, \lambda_2 >0$ such that: for all $p<p_c$, for all $k \geq 1$,
\begin{equation} \label{exp_decay}
\mathbb{P}_p(\mathcal{C}_V([0,2k] \times [0,k])) \leq \lambda_1 e^{-\lambda_2 \frac{k}{L(p)}}
\end{equation}
(see for instance Lemma 39 in \cite{No_2008}: it follows from similar arguments).

\item Let $(n_k)_{k \geq 1}$ be a sequence of integers, with $n_k  \stackrel[k \to \infty]{}{\longrightarrow} \infty$. If $(p_k)_{k \geq 1}$, with $\frac{1}{2} < p_k < 1$, satisfies $L(p_k) = o(n_k)$ as $k \to \infty$, then
\begin{equation} \label{largest_cluster}
\frac{|C_k|}{\theta(p_k) |B(n_k)|} \stackrel[k \to \infty]{\mathbb{P}}{\longrightarrow} 1,
\end{equation}
where $C_k$ denotes the largest $p_k$-open cluster in $B(n_k)$ (see Theorem 3.2 in \cite{BCKS}). Note that the condition $L(p_k) = o(n_k)$ is satisfied in particular when $p_k \equiv p \in (\frac{1}{2},1)$.

\item We will also use the following a-priori bounds on $\pi$: there exist $\alpha, c_1,c_2 > 0$ such that for all $1 \leq n_1 \leq n_2$,
\begin{equation} \label{apriori}
c_1 \left(\frac{n_2}{n_1}\right)^{\alpha} \leq \frac{\pi(n_1)}{\pi(n_2)} \leq c_2 \left(\frac{n_2}{n_1}\right)^{\frac{1}{2}}.
\end{equation}
The lower bound is a direct consequence of the Russo-Seymour-Welsh theorem, while the upper bound follows from the BK (van den Berg-Kesten) inequality. In particular, it yields immediately that $(n^2 \pi(n))_{n \geq 1}$ is essentially increasing, in the sense that:
\begin{equation} \label{ess_increasing}
\text{for all $1 \leq n_1 \leq n_2$,} \quad n_2^2 \pi(n_2) \geq c_2^{-1} n_1^2 \pi(n_1).
\end{equation}
\end{enumerate}

\bigskip

\subsection{Exceptional scales $(m_k)_{k \geq 1}$}

Let us first explain the intuitive meaning of the exceptional scales $(m_k)_{k \geq 1}$. Assuming that we know $m_k$, we look for a scale $m_{k+1}$ such that in a box of this size, a giant cluster appears, and freezes at a time $p_{k+1} = p_{k+1}(N)$ for which $L(p_{k+1}) \asymp m_k$, thus creating holes of size of order $m_k$. Since the largest cluster in a box of size $m$ has volume $\asymp \theta(p_{k+1}) m^2$, and $\theta(p_{k+1}) \asymp \pi(L(p_{k+1})) \asymp \pi(m_k)$ (from \eqref{theta}), we find that $m_{k+1}$ must satisfy the relation
$$\pi(m_k) m_{k+1}^2 \asymp N,$$
which leads to the inductive definition for $(m_k)_{k \geq 1}$.

Our proofs are based on the fact that we can identify, for square boxes, reasonably precisely the successive freezing times. On the one hand, if we start at scale $m_k$, then we expect $k-1$ successive freezings, that occur at times $p_j$ for which $L(p_j) \asymp m_{j-1}$ ($2 \leq j \leq k$), and a $k$th (and last) freezing at a supercritical time $p_1$. The last frozen cluster thus looks like a supercritical cluster, with density $\approx \theta(p_1)>0$, so that the probability for $0$ to be frozen is bounded away from $0$. On the other hand, if we start at a scale $\tilde{m}$ such that $m_k \ll \tilde{m} \ll m_{k+1}$, then we expect the next freezings to occur at times $p_j$ for which $m_{j-1} \ll L(p_j) \ll m_j$ ($2 \leq j \leq k$), and the last freezing at a time $p_1$ for which $1 \ll L(p_1) \ll m_1 \asymp \sqrt{N}$. We thus obtain a time $p_1$ which is well after the near-critical window (defined as the values of $p$ for which $L(p) \geq \sqrt{N}$), but that still satisfies $p_1 \to \frac{1}{2}$. Hence, the density of the last frozen cluster converges to $0$, as well as the probability for the origin to be frozen. Theorem \ref{thm1} and Theorem \ref{thm2} correspond, respectively, to these two possible situations.


\bigskip

We now prove Lemma \ref{lemma_mk}, which provides a-priori estimates on the scales $(m_k)_{k \geq 1}$.

\begin{proof}[Proof of Lemma \ref{lemma_mk}]
We proceed by induction over $k$. As noted earlier \eqref{size_m1}, the result holds for $k=0$. Assume it holds for some $k \geq 0$. By the definition of the $m_i$'s, we can write
$$\frac{m_{k+2}}{m_{k+1}} \stackrel[N \to \infty]{}{\sim} \left(\frac{\pi(m_{k})}{\pi(m_{k+1})}\right)^{1/2}.$$
Now, we can use the induction hypothesis and the a-priori bounds \eqref{apriori}.
\end{proof}

For future reference, let us also note that
\begin{equation}
m_k^2 \pi(m_k) = m_{k+1}^2 \pi(m_k) \left( \frac{m_k}{m_{k+1}} \right)^2 \sim N \left( \frac{m_k}{m_{k+1}} \right)^2
\end{equation}
so that for some strictly positive $\eta_k, \eta'_k \stackrel[k \to \infty]{}{\longrightarrow} 0$: for $N$ large enough,
\begin{equation} \label{apriori_mk}
N^{1-\eta_k} \leq m_k^2 \pi(m_k) \leq N^{1-\eta'_k}
\end{equation}
(in particular, it follows from \eqref{apriori} that $m_k \leq N^{2/3}$).

\begin{remark}
Informally speaking, \eqref{apriori_mk} means that $m_k$ approaches a scale $m_{\infty}$ that satisfies $m_{\infty}^2 \pi(m_{\infty}) = N^{1+o(1)}$, i.e. a scale such that: for critical percolation in a box of size $m_{\infty}$, the largest clusters have volume of order $N$. We expect $m_{\infty}$ to be the scale at which ``things start to happen'', i.e. at which the first frozen clusters form (looking like critical clusters). Then, successive clusters freeze around the origin, denser and denser, and we expect their number to tend to infinity (as $N \to \infty$).

However, we are not trying in this paper to make this heuristic argument rigorous. Instead, we are dealing with the process started in boxes of size $m_k$, for fixed $k \geq 1$. We hope that in order to study the last frozen cluster around the origin, so in particular our conjecture \eqref{conj}, it is enough to start around such a scale $m_k$, and understand what happens when $k \to \infty$. We study this idea further, for site percolation on the triangular lattice, in a second paper with Demeter Kiss \cite{BKN2}. Roughly speaking, we expect the full-plane process to ``fall'' between two exceptional scales, and not exactly on one of them. And moreover, we expect the lower bound in Theorem \ref{thm1} to decrease as $k \to \infty$: because of random effects on every scale, the process gets more and more ``spread out'', away from the exceptional scales.
\end{remark}

In the case of site percolation on the triangular lattice, where precise estimates were established thanks to the connection between critical percolation (in the scaling limit) and SLE (Schramm-Loewner Evolution) processes with parameter $6$ \cite{Sm_2001, LSW_2001a, LSW_2001b}, it is known \cite{LSW_2002} that
$$\pi(n) = n^{-\frac{5}{48}+o(1)} \quad \text{as $n \to \infty$}.$$
This leads immediately to $m_k(N) = N^{\delta_k + o(1)}$ as $N \to \infty$, where the sequence of exponents $(\delta_k)_{k \geq 0}$ satisfies
$$\delta_0 = 0, \quad \text{and for all $k \geq 0$,} \:\:\: \delta_{k+1} = \frac{1}{2} + \frac{5}{96} \delta_k.$$
In particular, this sequence is strictly increasing, and it converges to $\delta_{\infty} = \frac{48}{91}$ (note that $m_{\infty}(N) = N^{\delta_{\infty}}$ satisfies $m_{\infty}^2 \pi(m_{\infty}) = N^{1+o(1)}$). Let us also mention that for site percolation on the triangular lattice, many other critical exponents are known: in particular, $\theta(p) = (p-\frac{1}{2})^{5/36+o(1)}$ as $p \to \frac{1}{2}^+$, and $L(p) = |p-\frac{1}{2}|^{-4/3}$ as $p \to \frac{1}{2}$. Here, we decided to focus on bond percolation on the square lattice in order to stress that this more sophisticated technology is not needed for our results (Theorem \ref{thm1} and \ref{thm2}).

\bigskip

\section{Proofs of Proposition \ref{prop1} and Theorem \ref{thm1}}

\subsection{Proof of Proposition \ref{prop1}}

From now on, we drop the ceilings $\lceil . \rceil$ for notational convenience. We first note that the volume of the box $B(C \sqrt{N})$ is
\begin{equation}
| B(C \sqrt{N}) | \stackrel[N \to \infty]{}{\sim} 4 C^2 N,
\end{equation}
so that the case $C < \frac{1}{2}$ is clear. We can thus assume $C>\frac{1}{2}$, and introduce $\bar{p} \in (p_c,1)$ such that $\theta(\bar{p}) = \frac{1}{4 C^2}$, i.e.
\begin{equation}
\theta(\bar{p}) | B(C \sqrt{N}) | \stackrel[N \to \infty]{}{\sim} N.
\end{equation}

For arbitrary $\hat{p}$ and $\check{p}$ with $p_c < \check{p} < \bar{p} < \hat{p} < 1$, let us consider the following events:
\begin{itemize}
\item $D_1 = \{$there is a $\check{p}$-open path from $B(\sqrt[3]{N})$ to $\partial B(C \sqrt{N})$, and there is a $\check{p}$-open circuit in each of the annuli $A(k \sqrt[3]{N}, (k+1) \sqrt[3]{N})$, $k \geq 1$, contained in $B(C \sqrt{N})\}$,

\item $D_2 = \{$the largest $\check{p}$-open cluster in $B(C \sqrt{N})$ has volume $< N \}$,

\item $D_3 = \{$the largest $\hat{p}$-open cluster in $B(C \sqrt{N})$ has volume $> N \}$.
\end{itemize}
Each of these events has probability tending to $1$ as $N \to \infty$. For $D_1$, this follows from exponential decay of connection probabilities. For $D_2$ and $D_3$, it follows from the observation below \eqref{largest_cluster}, and our choice of $\bar{p}$: since $\theta$ is strictly increasing on $[p_c,1]$, we have
$$\theta(\check{p}) < \theta(\bar{p}) = \frac{1}{4 C^2} < \theta(\hat{p}).$$

If all these three events $D_1$, $D_2$ and $D_3$ hold, then there is no freezing in $B(C \sqrt{N})$ after time $\hat{p}$ (note that each annulus appearing in the definition of $D_1$ has volume $<N$, as well as $B(\sqrt[3]{N})$), which implies
\begin{equation}
F_N( C \sqrt{N} ) \leq \mathbb{P}_{\hat{p}} \Big( 0 \leadsto \partial B \Big(\frac{1}{3} \sqrt{N} \Big) \Big) + \mathbb{P}(D_1^c \cup D_2^c \cup D_3^c),
\end{equation}
and by letting $N \to \infty$,
\begin{equation} \label{upper_bound}
\limsup_{N \to \infty} F_N( C \sqrt{N} ) \leq \theta(\hat{p}).
\end{equation}

On the other hand, if each of $D_1$, $D_2$ and $D_3$ occurs, and if there is a $\check{p}$-open path from $0$ to $\partial B(C \sqrt{N})$, then $0$ freezes. Hence,
\begin{equation}
F_N( C \sqrt{N} ) \geq \mathbb{P}_{\check{p}}( 0 \leadsto \partial B (C \sqrt{N}) ) - \mathbb{P}(D_1^c \cup D_2^c \cup D_3^c),
\end{equation}
and by taking $N \to \infty$,
\begin{equation} \label{lower_bound}
\liminf_{N \to \infty} F_N( C \sqrt{N} ) \geq \theta(\check{p}).
\end{equation}

Since \eqref{upper_bound} and \eqref{lower_bound} hold for all $\hat{p} > \bar{p}$ and $\check{p} < \bar{p}$, we finally get, using the continuity of $\theta$,
\begin{equation}
F_N( C \sqrt{N} ) \stackrel[N \to \infty]{}{\longrightarrow} \theta(\bar{p}) = \frac{1}{4C^2},
\end{equation}
which completes the proof of Proposition \ref{prop1}.

\bigskip

\subsection{Proof of Theorem \ref{thm1}}

In order to proceed by induction, we prove Proposition \ref{prop2} below, of which Theorem \ref{thm1} is clearly a particular case. In order to state it, we first need more notation: for $n_1 < n_2$, let $\Gamma_N(n_1,n_2) = \{$for every dual circuit $\gamma$ in the annulus $A(n_1,n_2)$, for the process in the domain $\mathcal{D}(\gamma)$ with parameter $N$, $0$ is frozen$\}$. Here, we use the fact that the processes in various subgraphs of $\mathbb{Z}^2$ can be coupled in a natural way.

\begin{proposition} \label{prop2}
For any $k \geq 2$, and $0 < C_1 < C_2$, we have
\begin{equation}
\liminf_{N \to \infty} \mathbb{P}(\Gamma_N(C_1 m_k(N), C_2 m_k(N))) > 0.
\end{equation}
This result also holds for $k=1$ under the extra condition that $C_1 > (2 c_0)^{-1}$, where $c_0$ is the constant appearing in \eqref{size_m1}.
\end{proposition}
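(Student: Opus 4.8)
The plan is to prove Proposition~\ref{prop2} by induction on $k$, where the base case $k=1$ is essentially a quantitative restatement of the lower bound already established in Proposition~\ref{prop1}, and the inductive step captures the ``generational'' freezing heuristic described in the introduction: a giant cluster freezes at scale $m_{k+1}$, creating holes of size $\asymp m_k$, inside one of which the origin sits and the process then behaves like the process started at scale $m_k$. The event $\Gamma_N(n_1,n_2)$ is designed precisely so that this induction can be run: if $0$ is frozen for \emph{every} domain $\mathcal{D}(\gamma)$ with $\gamma$ a dual circuit in $A(C_1 m_k, C_2 m_k)$, then in the larger process, whichever hole around the origin is eventually cut out (its boundary being some such dual circuit), the restricted process inside it still freezes the origin.

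**Base case.** For $k=1$, I would mimic the proof of Proposition~\ref{prop1}. Recall $m_1 \sim c_0\sqrt N$, so $C_1 m_1$ and $C_2 m_1$ are both of order $\sqrt N$. The condition $C_1 > (2c_0)^{-1}$ ensures $C_1 m_1 \gtrsim \tfrac12\sqrt N$, i.e. any domain $\mathcal D(\gamma)$ with $\gamma\subset A(C_1 m_1, C_2 m_1)$ contains a box of volume exceeding $N$, so that a supercritical cluster inside it can indeed reach volume $N$ and freeze. Concretely I would fix a supercritical $\check p$ with $\theta(\check p)\cdot|B(C_1 m_1)| > N$, and intersect the three events $D_1, D_2, D_3$ (now defined relative to the relevant scales and a $\check p$ chosen so that no freezing happens before $\check p$, while a $\check p$-cluster reaching the origin does have volume $\geq N$). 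Uniformity of the circuit/crossing estimates over the family of domains $\mathcal D(\gamma)$ gives a positive liminf for $\mathbb P(\Gamma_N(C_1 m_1, C_2 m_1))$.

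**Inductive step.** Assume the result at level $k$; I want it at level $k+1$. Fix $0<C_1<C_2$ and work in a box of size $\asymp m_{k+1}$. Choose the freezing parameter $p_{k+1}=p_{k+1}(N)$ so that $L(p_{k+1})\asymp m_k$; then by \eqref{theta} the density of the largest $p_{k+1}$-cluster is $\theta(p_{k+1})\asymp\pi(m_k)$, and since $m_{k+1}^2\pi(m_k)\simeq N$ this cluster has volume $\asymp N$ — so it freezes right around parameter $p_{k+1}$ (I would sandwich $p_{k+1}$ between two parameters $\check p,\hat p$ exactly as in Proposition~\ref{prop1}, using \eqref{largest_cluster} to control the largest-cluster volume and a circuit event of type $D_1$ to guarantee this cluster is unique and macroscopic). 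On the event that this giant cluster freezes, the complement breaks into holes; because the freezing parameter satisfies $L(p_{k+1})\asymp m_k$, the typical holes have diameter of order $m_k$, and with probability bounded away from $0$ the origin lies in a hole whose \emph{outer boundary is a dual circuit $\gamma$ sitting in an annulus $A(C_1' m_k, C_2' m_k)$} for suitable constants $C_1'<C_2'$. Conditionally on the configuration up to time $p_{k+1}$ and on the location of this circuit, the process inside $\mathcal D(\gamma)$ for times after $p_{k+1}$ is exactly a fresh $N$-volume-frozen process in that domain (using the coupling via the $\tau_e$'s and the finite-range/locality of the dynamics); by the inductive hypothesis, restricted to the event $\Gamma_N(C_1' m_k, C_2' m_k)$, the origin freezes inside $\mathcal D(\gamma)$ with probability bounded away from $0$. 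Multiplying the (asymptotically positive) probabilities of these two stages yields $\liminf_N \mathbb P(\Gamma_N(C_1 m_{k+1}, C_2 m_{k+1}))>0$.

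**Main obstacle.** The delicate point is the geometric/probabilistic control of the holes and the conditioning argument. I must show that, with probability bounded below, the hole containing the origin has its boundary circuit landing in a \emph{prescribed} annulus of scale $m_k$ (not too small, so that the inductive hypothesis applies, and not too large), and I must argue that conditioning on the first-generation freezing leaves a clean, fresh frozen process inside the hole to which the level-$k$ statement can be applied verbatim. The reason $\Gamma_N$ is phrased as a statement about \emph{all} dual circuits in an annulus is exactly to sidestep the measurability headache of the random, configuration-dependent circuit $\gamma$: I get to quantify over every possible hole boundary at once, so the inductive hypothesis can be invoked without delicate conditioning on the precise shape of the hole. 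Making the locality of the dynamics precise — that the evolution inside $\mathcal D(\gamma)$ after the giant cluster freezes is genuinely independent of the outside and distributed as the process on $\mathcal D(\gamma)$ — is where I expect to spend the most care.
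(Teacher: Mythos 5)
Your overall strategy is exactly the paper's: induction on $k$, a Proposition-\ref{prop1}-style base case, freezing parameters chosen so that the characteristic length is $\asymp m_k$, events forcing the first freezing into a window $(p_2,p_1)$ and the hole around $0$ to have its boundary circuit in a prescribed annulus at scale $m_k$, then the induction hypothesis invoked through the all-circuits event $\Gamma_N$. However, the one step you yourself flag as the main obstacle is described in a way that would fail if implemented literally. You propose to condition ``on the configuration up to time $p_{k+1}$ and on the location of this circuit'' and then treat the interior as ``a fresh $N$-volume-frozen process'' for times after $p_{k+1}$. Two problems: (a) conditioning on the full configuration up to time $p_{k+1}$ reveals the interior edges, so no randomness remains inside the hole to which the level-$k$ statement could be applied; (b) even with a correct factorization, the object the induction hypothesis speaks about is the coupled process on $\mathcal{D}(\tilde{\gamma})$ run \emph{from time $0$} (that is what $\Gamma_N$ concerns), not a process restarted at $p_{k+1}$, and identifying it with the restriction of the big process requires the additional event that no cluster inside $\tilde{\gamma}$ reaches volume $N$ before the giant cluster freezes.

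The paper resolves both points without conditioning. It decomposes the good event $\Delta$ as a disjoint union over the possible hole boundaries $\tilde{\gamma}$ in $A(C_3 m_k, 4C_3 m_k)$, and for each $\tilde{\gamma}$ uses the inclusion $\Delta(\tilde{\gamma}) \supseteq \Delta_1(\tilde{\gamma}) \cap \Delta_2(\tilde{\gamma})$, where $\Delta_1(\tilde{\gamma})$ is the \emph{projection} of $\Delta(\tilde{\gamma})$ onto the edges on $\tilde{\gamma}$ and in its exterior, and $\Delta_2(\tilde{\gamma})$ is the interior event that the largest $p_1$-open cluster in $\mathcal{D}(\tilde{\gamma})$ has volume $<N$, which has probability $\geq 1-\epsilon(N)$ uniformly in $\tilde{\gamma}$ and plays precisely the role missing from your sketch: it both makes the inclusion valid (interior edges cannot trigger an early freezing that would alter the hole) and guarantees that the restriction of the big process coincides with the from-time-$0$ process on $\mathcal{D}(\tilde{\gamma})$. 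Since $\Delta_1(\tilde{\gamma})$ is independent of $\Delta_2(\tilde{\gamma}) \cap \{\text{$0$ freezes for the process in $\mathcal{D}(\tilde{\gamma})$}\}$, and $\Gamma_N(C_3 m_k, 4C_3 m_k)$ is contained in the latter freezing event for \emph{every} admissible $\tilde{\gamma}$, summing over $\tilde{\gamma}$ yields $\mathbb{P}(\Gamma_N(C_1 m_{k+1}, C_2 m_{k+1})) \geq \mathbb{P}(\Delta)\bigl(\mathbb{P}(\Gamma_N(C_3 m_k, 4 C_3 m_k)) - \epsilon(N)\bigr)$, closing the induction; this is the content your ``quantify over every possible hole boundary at once'' remark gestures at, but the $\Delta_1/\Delta_2$ factorization is the actual mechanism and needs to be stated. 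Two smaller slips: in your base case, the displayed condition $\theta(\check{p})\,|B(C_1 m_1)| > N$ is the requirement on the \emph{upper} parameter $\hat{p}$ (for $\check{p}$ one needs the largest $\check{p}$-cluster in $B(C_2 m_1)$ to have volume $<N$, so that nothing freezes before $\check{p}$, matching your own parenthetical); and in the inductive step, when the inner scale is $m_1$ (the case $k=1$), the inner annulus constant $C_3$ must be taken $> (2c_0)^{-1}$ for the induction hypothesis to apply.
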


\begin{proof}
We prove the result by induction over $k$. The case $k=1$ follows from a similar reasoning as for Proposition \ref{prop1}. For that, let us introduce $p_c < \check{p} < \hat{p} < 1$ such that
\begin{equation}
0 < \theta(\check{p}) < \frac{1}{4 c_0^2 C_2^2} \quad \text{and} \quad \theta(\hat{p}) > \frac{1}{4 c_0^2 C_1^2}
\end{equation}
(we use $C_1 > (2 c_0)^{-1}$), and similar events as for the proof of Proposition \ref{prop1}:
\begin{itemize}
\item $D_1 = \{$there is a $\check{p}$-open path from $B(\sqrt[3]{N})$ to $\partial B(C_1 \sqrt{N})$, and there is a $\check{p}$-open circuit in each of the annuli $A(k \sqrt[3]{N}, (k+1) \sqrt[3]{N})$, $k \geq 1$, contained in $B(C_1 m_1(N))\}$,

\item $D_2 = \{$the largest $\check{p}$-open cluster in $B(C_2 m_1(N))$ has volume $< N \}$,

\item $D_3 = \{$the largest $\hat{p}$-open cluster in $B(C_1 m_1(N))$ has volume $> N \}$.
\end{itemize}
For the same reasons as before (see proof of Proposition \ref{prop1}), each of these events has probability tending to $1$ as $N \to \infty$. If $D_1$, $D_2$ and $D_3$ occur, and if there is a $\check{p}$-open path from $0$ to $\partial B(C_2 m_1(N))$, then for every dual circuit $\gamma$ in $A(C_1 m_1(N),C_2 m_1(N))$, $0$ freezes for the process in $\mathcal{D}(\gamma)$ with parameter $N$. This yields
\begin{equation}
\mathbb{P}(\Gamma_N(C_1 m_1(N), C_2 m_1(N))) \geq \mathbb{P}_{\check{p}}( 0 \leadsto \partial B (C_2 m_1(N)) ) - \mathbb{P}(D_1^c \cup D_2^c \cup D_3^c),
\end{equation}
and by taking $N \to \infty$,
\begin{equation} \label{lower_bound'}
\liminf_{N \to \infty} \mathbb{P}(\Gamma_N(C_1 m_1(N), C_2 m_1(N))) \geq \theta(\check{p}) > 0.
\end{equation}

Now, let us fix $k \geq 1$. We assume that Proposition \ref{prop2} holds for $k$, and we show that it then holds for $(k+1)$. In the following, $\gamma$ is an arbitrary dual circuit in $A(C_1 m_{k+1},C_2 m_{k+1})$.

We fix some $\delta>0$ very small (for the reasoning below, $\delta = \frac{1}{100}$ is enough, as the reader can check), and we define $p_2 = p_2(N)$ by
\begin{equation} \label{def_p2'}
\theta(p_2) (2 C_2 m_{k+1})^2 = N (1- \delta),
\end{equation}
and $p_1 = p_1(N)$ by
\begin{equation} \label{def_p1'}
\theta(p_1) \left( 2 \frac{9}{10} C_1 m_{k+1} \right)^2 = N (1+ \delta).
\end{equation}
Note that $p_2 \leq p_1$, and that the associated characteristic lengths satisfy
\begin{equation} \label{est_L}
L(p_1) \asymp L(p_2) \asymp m_k
\end{equation}
(the symbol $\asymp$ means that the constants depend only on $C_1$, $C_2$ and $\delta$). Indeed, it follows from \eqref{def_p1'} and \eqref{def_p2'}, and then \eqref{def_mk}, that for $i=1,2$,
$$\theta(p_i) \asymp \frac{N}{m_{k+1}^2} \asymp \pi(m_k),$$
so \eqref{theta} implies that $\pi(L(p_i)) \asymp \pi(m_k)$, which finally yields \eqref{est_L} (using \eqref{apriori}).

Let us now introduce the following events:
\begin{itemize}
\item $E_1 = \{$there is a $p_2$-open circuit in the annulus $A\left( \frac{7}{10} C_1 m_{k+1}, \frac{8}{10} C_1 m_{k+1} \right)$, and in the annulus $A\left( \frac{9}{10} C_1 m_{k+1}, C_1 m_{k+1} \right)\}$,

\item $E_2 = \{$the largest $p_1$-open cluster in the box $B\left( \frac{9}{10} C_1 m_{k+1} \right)$ has volume $\geq N \}$,

\item $E_3 = \{$the largest $p_1$-open cluster in $B\left( \frac{8}{10} C_1 m_{k+1} \right)$, the largest $p_1$-open cluster in $A\left( \frac{7}{10} C_1 m_{k+1}, C_1 m_{k+1} \right)$, and the largest $p_2$-open cluster in $B\left( C_2 m_{k+1} \right)$ all have volume $< N \}$.
\end{itemize}
Note that it follows from \eqref{est_L} and Lemma \ref{lemma_mk} that $L(p_i) \ll m_{k+1}$ ($i \in \{1,2\}$). Hence, \eqref{exp_decay} implies that $E_1$ has probability tending to $1$ as $N \to \infty$, and \eqref{largest_cluster} implies the same for $E_2$ and $E_3$.

We further introduce, for some $C_3>0$,
\begin{itemize}
\item $E_4 = \{$there is a $p_2$-open path from $B\left( 2 C_3 m_k \right)$ to $\partial B\left( C_1 m_{k+1} \right)$, and a $p_2$-open circuit in $A\left( 2 C_3 m_k, 4 C_3 m_k \right) \}$,

\item $E_5 = \{$there is a $p_1$-dual-open circuit in $A\left( C_3 m_k, 2 C_3 m_k \right) \}$.
\end{itemize}
The constant $C_3$ can be chosen arbitrarily (for instance, we can take $C_3 = 1$), except in the case $k=1$ when we have to assume that $C_3 > (2 c_0)^{-1}$ (in order to use the induction hypothesis). We know from \eqref{est_L} that $\mathbb{P}(E_4 \cap E_5) \geq \lambda > 0$, for some constant $\lambda$ independent of $N$ and the exact choice of $\gamma$.

\begin{figure}
\begin{center}
\includegraphics[width=10cm]{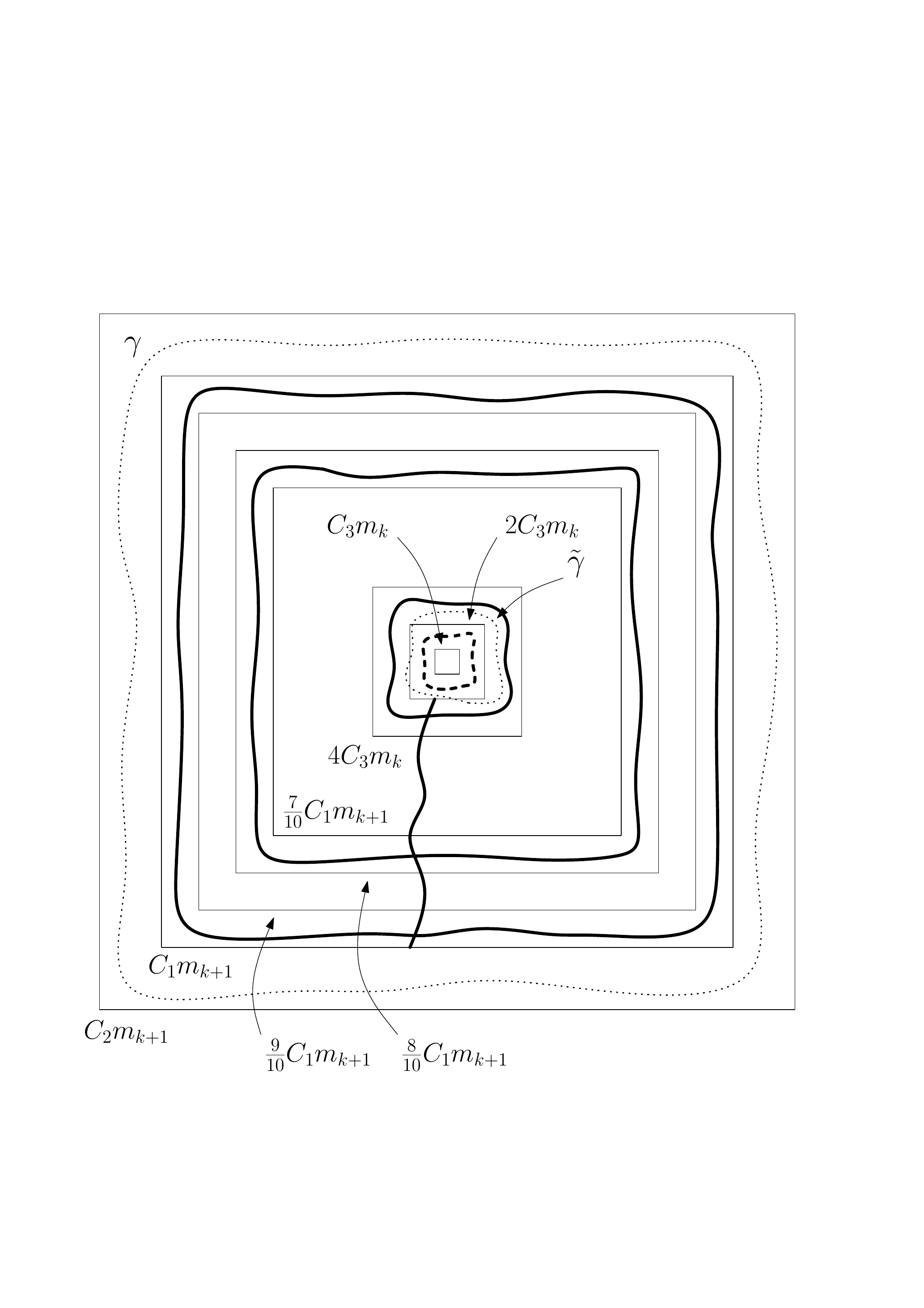}
\caption{\label{fig_thm1} This figure depicts the events $E_i$, $i = 1,\ldots, 5$, used in the proof of Proposition \ref{prop2}: solid lines represent $p_2$-open paths, and the small circuit in $A\left( C_3 m_k, 2 C_3 m_k \right)$ is $p_1$-dual-open. The dual circuit $\tilde{\gamma}$ is the boundary of the hole containing $0$.}
\end{center}
\end{figure}

We then make the following observations for the process in $\mathcal{D}(\gamma)$: if all these events $E_i$ ($1 \leq i \leq 5$) happen, then (no matter where the curve $\gamma$ is located exactly in $A(C_1 m_{k+1}, C_2 m_{k+1})$)
\begin{enumerate}[(i)]
\item the first time that vertices in the box $B\left( \frac{9}{10} C_1 m_{k+1} \right)$ freeze lies in the time interval $(p_2,p_1)$,

\item and if we look at the ``hole containing $0$'' in that frozen cluster, its boundary is a dual circuit contained in $A\left( C_3 m_k, 4 C_3 m_k \right)$.
\end{enumerate}
Hence, if we call $\Delta$ the event that (i) and (ii) occur, we get: for $N$ large enough,
$$\mathbb{P}(\Delta) \geq \frac{\lambda}{2} > 0.$$
We can then decompose $\Delta$ as the disjoint union
$$\Delta = \bigsqcup_{\tilde{\gamma}} \Delta(\tilde{\gamma}),$$
where $\tilde{\gamma}$ ranges over the set of dual circuits in $A\left( C_3 m_k, 4 C_3 m_k \right)$, and $\Delta(\tilde{\gamma})$ is the event $\{\Delta$ occurs, and $\tilde{\gamma}$ is the boundary of the hole mentioned in (ii)$\}$. For each such $\tilde{\gamma}$, we have
$$\Delta(\tilde{\gamma}) \supseteq \Delta_1(\tilde{\gamma}) \cap \Delta_2(\tilde{\gamma}),$$
where
\begin{itemize}
\item $\Delta_1(\tilde{\gamma})$ is the projection of the event $\Delta(\tilde{\gamma})$ on the set of edges in $\tilde{\gamma}$ and its exterior,

\item $\Delta_2(\tilde{\gamma}) = \{$the largest $p_1$-open cluster in $\mathcal{D}(\tilde{\gamma})$ has volume $< N \}$.
\end{itemize}
Note that $\Delta_1(\tilde{\gamma})$ and $\Delta_2(\tilde{\gamma})$ are independent, and $\mathbb{P}(\Delta_2(\tilde{\gamma})) \geq 1 - \epsilon(N)$, for some $\epsilon(N) \stackrel[N \to \infty]{}{\longrightarrow} 0$ (i.e. uniformly in $\tilde{\gamma}$ of the prescribed form). This yields
\begin{align*}
\mathbb{P}(\Gamma_N(C_1 m_{k+1}, C_2 m_{k+1})) & \\
& \hspace{-3cm} \geq \sum_{\tilde{\gamma}} \mathbb{P} \left( \Delta_1(\tilde{\gamma}) \cap \Delta_2(\tilde{\gamma}) \cap \{\text{$0$ freezes for the process in $\mathcal{D}(\tilde{\gamma})$}\}\right)\\
& \hspace{-3cm} \geq \sum_{\tilde{\gamma}} \mathbb{P} \left( \Delta_1(\tilde{\gamma}) \right) \mathbb{P} \left(\Delta_2(\tilde{\gamma}) \cap \{\text{$0$ freezes for the process in $\mathcal{D}(\tilde{\gamma})$}\}\right).
\end{align*}
For each $\tilde{\gamma}$, 
\begin{align*}
\mathbb{P} \left(\Delta_2(\tilde{\gamma}) \cap \{\text{$0$ freezes for the process in $\mathcal{D}(\tilde{\gamma})$}\}\right) & \\
& \hspace{-4cm} \geq \mathbb{P}(\Gamma_N(C_3 m_k, 4 C_3 m_k)) - \mathbb{P}(\Delta_2(\tilde{\gamma})^c),
\end{align*}
so
$$\mathbb{P}(\Gamma_N(C_1 m_{k+1}, C_2 m_{k+1})) \geq \mathbb{P}(\Delta) \big( \mathbb{P}(\Gamma_N(C_3 m_k, 4 C_3 m_k)) - \epsilon(N) \big).$$
Hence,
$$\liminf_{N \to \infty} \mathbb{P}(\Gamma_N(C_1 m_{k+1}, C_2 m_{k+1})) \geq \frac{\lambda}{2} \cdot \liminf_{N \to \infty} \mathbb{P}(\Gamma_N(C_3 m_k, 4 C_3 m_k)),$$
which completes the proof of Proposition \ref{prop2}, using the induction hypothesis.
\end{proof}


\bigskip

\section{Proof of Theorem \ref{thm2}}

We proceed in a similar way as for the proof of Theorem \ref{thm1}. We show by induction Proposition \ref{prop3} below, regarding the event $\tilde{\Gamma}_N(n_1,n_2) = \{$there exists a dual circuit $\gamma$ in the annulus $A(n_1,n_2)$ such that for the process in the domain $\mathcal{D}(\gamma)$ with parameter $N$, $0$ is frozen$\}$ ($n_1<n_2$). This result clearly implies Theorem \ref{thm2}. The proof turns out to be slightly simpler here, since no conditioning is required in order to use the induction hypothesis.

\begin{proposition} \label{prop3}
Let $k \geq 0$, $\epsilon>0$, and $0 < C_1 < C_2$. Then there exists a constant $C=C(k,\epsilon,C_1,C_2)$ such that: for every function $\tilde{m}(N)$ that satisfies
\begin{equation}
C m_k(N) \leq C_1 \tilde{m}(N) \leq C_2 \tilde{m}(N) \leq C^{-1} m_{k+1}(N)
\end{equation}
for $N$ large enough, we have
\begin{equation}
\limsup_{N \to \infty} \mathbb{P}(\tilde{\Gamma}_N(C_1 \tilde{m}(N), C_2 \tilde{m}(N))) \leq \epsilon.
\end{equation}
\end{proposition}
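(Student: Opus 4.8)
The plan is to prove Proposition \ref{prop3} by induction on $k$, in close analogy with the proof of Proposition \ref{prop2}, the difference being that we now chase an \emph{upper} bound, and therefore want a \emph{high-probability} (rather than merely positive-probability) description of the geometry around $0$ after the first freezing. The base case $k=0$ is essentially trivial: if $C>2c_0$ then $C_2\tilde m\le C^{-1}m_1\sim C^{-1}c_0\sqrt N$, so any domain $\mathcal D(\gamma)$ with $\gamma\subseteq A(C_1\tilde m,C_2\tilde m)$ lies in $B(C_2\tilde m)$ and has volume $|\mathcal D(\gamma)|\le|B(C^{-1}c_0\sqrt N)|\sim 4C^{-2}c_0^2N<N$ for $N$ large. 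No cluster can then reach volume $N$, $0$ never freezes, and $\mathbb P(\tilde\Gamma_N(C_1\tilde m,C_2\tilde m))=0$.

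For the induction step, assume the statement at level $k$ and fix $\tilde m$ with $Cm_{k+1}\le C_1\tilde m\le C_2\tilde m\le C^{-1}m_{k+2}$. Exactly as in \eqref{def_p2'}--\eqref{est_L} I would define $p_2<p_1$ by $\theta(p_2)|B(C_2\tilde m)|=N(1-\delta)$ and $\theta(p_1)|B(\tfrac{9}{10}C_1\tilde m)|=N(1+\delta)$; then \eqref{theta}, \eqref{apriori} and \eqref{def_mk} give $L(p_1)\asymp L(p_2)\asymp\ell$ for a scale $\ell$ with $\pi(\ell)\asymp N/\tilde m^2$, and since $\tilde m$ lies well inside $(m_{k+1},m_{k+2})$ (this is where the largeness of $C$ enters, through Lemma \ref{lemma_mk}) one has $m_k\ll\ell\ll m_{k+1}$. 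I would then introduce a good event $G$, measurable with respect to independent percolation at the two levels $p_2$ and $p_1$, on which, for \emph{every} dual circuit $\gamma$ in $A(C_1\tilde m,C_2\tilde m)$: (a) the first freezing in $\mathcal D(\gamma)$ occurs at a time $p^\ast\in(p_2,p_1)$ --- the lower end coming from $\{$the largest $p_2$-open cluster in $B(C_2\tilde m)$ has volume $<N\}$ and the upper end from $\{$the largest $p_1$-open cluster in $B(\tfrac{9}{10}C_1\tilde m)$ has volume $\ge N\}$, both of probability $\to1$ by \eqref{largest_cluster}; and (b) the hole $\mathcal D(\tilde\gamma)$ containing $0$ in the resulting frozen configuration is bounded by a dual circuit $\tilde\gamma\subseteq A(a\ell,b\ell)$. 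By \eqref{largest_cluster} the frozen cluster is the giant one and $0$ avoids it with probability $\to1$; the outer bound $b$ (no hole larger than $b\ell$) follows from a $p_2$-open circuit of the giant around $0$ inside $B(b\ell)$, and the inner bound $a$ (no hole smaller than $a\ell$) from the absence of a $p_1$-open circuit of the giant around $0$ inside $B(a\ell)$, monotonicity in $p$ converting these $p_2$- resp. $p_1$-statements into ones valid at the random time $p^\ast$. Choosing $a$ small and $b$ large (depending only on $\epsilon$), and using \eqref{apriori} to quantify the decay of these circuit probabilities, I would arrange $\mathbb P(G^c)\le\epsilon/2$ for $N$ large.

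The key simplification compared with Proposition \ref{prop2} is that no conditioning or independence argument is needed, only a set inclusion. On $G$ the circuit $\tilde\gamma$ is sealed off, since every edge meeting $\tilde\gamma$ from outside touches the frozen giant cluster and hence stays closed forever; so by the coupling the process restricted to $\mathcal D(\tilde\gamma)$ is exactly volume-frozen percolation in $\mathcal D(\tilde\gamma)$ with parameter $N$, and $0$ freezes in $\mathcal D(\gamma)$ if and only if $0$ freezes for the process in $\mathcal D(\tilde\gamma)$. As $\tilde\gamma\subseteq A(a\ell,b\ell)$, this forces $\tilde\Gamma_N(a\ell,b\ell)$, giving $G\cap\tilde\Gamma_N(C_1\tilde m,C_2\tilde m)\subseteq\tilde\Gamma_N(a\ell,b\ell)$ and thus
\[
\mathbb P(\tilde\Gamma_N(C_1\tilde m,C_2\tilde m))\le\mathbb P(G^c)+\mathbb P(\tilde\Gamma_N(a\ell,b\ell)).
\]
Because $m_k\ll\ell\ll m_{k+1}$, for $N$ large the scale $\ell$ satisfies $C_k m_k\le a\ell\le b\ell\le C_k^{-1}m_{k+1}$, where $C_k=C(k,\epsilon/2,a,b)$ is the constant provided by the induction hypothesis at level $k$; that hypothesis bounds the last term by $\epsilon/2$, yielding $\limsup_N\mathbb P(\tilde\Gamma_N(C_1\tilde m,C_2\tilde m))\le\epsilon$. (The level-$(k+1)$ constant $C$ is then taken large enough to make $\ell/m_k\to\infty$ and $m_{k+1}/\ell\to\infty$, i.e.\ to accommodate $C_k$, $a$ and $b$.)

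The delicate point is step (b), and in particular the \emph{lower} bound on the size of the hole containing $0$: I must show that with probability close to $1$ the giant cluster does not already surround $0$ at scale $a\ell$. This is the statement that a typical vertex lies deep inside a hole, of diameter $\asymp\ell=L(p^\ast)$, rather than hugging the frozen cluster; quantitatively the probability of a hole of diameter $\le a\ell$ should be controlled by a ratio of the form $\pi(\ell)/\pi(a\ell)$, which by \eqref{apriori} is at most $\mathrm{const}\cdot a^{\alpha}$, small uniformly in $N$. Establishing this near-critical estimate uniformly over all $p\in(p_2,p_1)$ (so that it applies at the random freezing time $p^\ast$) and over the location of $\gamma$, together with the careful but routine check that the restricted process inside $\mathcal D(\tilde\gamma)$ is genuinely autonomous, is where the real work lies; the remaining bookkeeping parallels Proposition \ref{prop2}.
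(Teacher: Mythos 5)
Your proposal follows the paper's proof essentially step for step: the same trivial base case $k=0$ via a volume count, the same definitions \eqref{def_p2}--\eqref{def_p1} of $p_2 \leq p_1$, a high-probability good event forcing the first freezing time into $(p_2,p_1)$ and the boundary of the hole around $0$ into an annulus at scale $\asymp L(p_2)$, the same set inclusion $\tilde{\Gamma}_N(C_1\tilde{m},C_2\tilde{m}) \cap G \subseteq \tilde{\Gamma}_N(a\ell,b\ell)$ (with no conditioning needed, as the paper also notes), and the same application of the induction hypothesis after using \eqref{def_p2}, \eqref{theta} and \eqref{apriori} to place $a\ell, b\ell$ inside $[C^{(k)} m_k, (C^{(k)})^{-1} m_{k+1}]$ by taking the level-$(k+1)$ constant large (your phrasing ``$\ell/m_k \to \infty$'' should read ``$\ell/m_k$ at least a large constant,'' which is all that is needed and clearly what you intend). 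The only divergence is in implementation: where you invoke a uniform near-critical one-arm ratio $\pi(\ell)/\pi(a\ell) \lesssim a^{\alpha}$ for the inner bound on the hole, the paper instead uses the event $E_5$ (a $p_1$-dual-open circuit in $A(C_4 L(p_2), C_3 L(p_2))$, of probability $\geq 1-\epsilon$ once $C_4/C_3$ is small, by a multi-scale argument below the characteristic length), and it pins down that the frozen cluster is the one containing a circuit around $0$ via the explicit circuit/volume ``sandwich'' $E_1$, $E_3$, $E_4$ rather than via \eqref{largest_cluster} alone --- precisely the ``routine work'' your sketch defers.
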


\begin{proof}
We proceed by induction over $k$. First, we note that the case $k=0$ is clear: we know from \eqref{size_m1} that $C^{-1} m_1(N) \sim C^{-1} c_0 \sqrt{N}$, and we just need to choose $C$ large enough so that $C^{-1} c_0 < \frac{1}{2}$ (then the corresponding probability is $0$ for $N$ large enough, since every domain $\mathcal{D}(\gamma)$ of the prescribed form has volume $<N$).

Now, let us fix $k \geq 0$. We assume that Proposition \ref{prop3} holds for $k$, and we show that it then holds for $(k+1)$. Let us consider an arbitrary $\epsilon>0$, and a function $\tilde{m}(N)$ such that
$$C^{(k+1)} m_{k+1}(N) \leq C_1 \tilde{m}(N) \leq C_2 \tilde{m}(N) \leq (C^{(k+1)})^{-1} m_{k+2}(N)$$
(we will explain later how to choose $C^{(k+1)}$). Finally, let us fix some $\delta>0$ very small (again, $\delta = \frac{1}{100}$ works).

We define $p_2 = p_2(N)$ by
\begin{equation} \label{def_p2}
\theta(p_2) (2 C_2 \tilde{m})^2 = N (1- \delta),
\end{equation}
and $p_1 = p_1(N)$ by
\begin{equation} \label{def_p1}
\theta(p_1) \left( 2 \frac{9}{10} C_1 \tilde{m} \right)^2 = N (1+ \delta)
\end{equation}
(note that $p_2 \leq p_1$). We first make two observations on the associated characteristic lengths.
\begin{enumerate}[(i)]
\item One has
\begin{equation} \label{est_L1}
L(p_1) \asymp L(p_2)
\end{equation}
(where the constants depend only on $C_1$, $C_2$ and $\delta$). Indeed, it follows from \eqref{def_p1} and \eqref{def_p2} that $\theta(p_1) \asymp \theta(p_2)$, so $\pi(L(p_1)) \asymp \pi(L(p_2))$ (using \eqref{theta}), and \eqref{apriori} finally implies \eqref{est_L1}.

\item Also,
\begin{equation} \label{est_L2}
L(p_1), L(p_2) \ll \tilde{m}.
\end{equation}
Indeed, we know that $\tilde{m} \leq m_{k+2}$ for $N$ large enough, so \eqref{ess_increasing} and \eqref{apriori_mk} imply that
$$\tilde{m}^2 \pi(\tilde{m}) \leq c_2 m_{k+2}^2 \pi(m_{k+2}) \leq c_2 N^{1-\eta'_{k+2}}.$$
Since we know from \eqref{def_p1} and \eqref{theta} that $\tilde{m}^2 \pi(L(p_i)) \asymp N$ ($i \in \{1,2\}$), we deduce $\pi(L(p_i)) \gg \pi(\tilde{m})$, and finally (using \eqref{apriori}) $L(p_i) \ll \tilde{m}$.
\end{enumerate}
We now consider the following events:
\begin{itemize}
\item $E_1 = \{$there is a $p_2$-open circuit in the annulus $A\left( \frac{7}{10} C_1 \tilde{m}, \frac{8}{10} C_1 \tilde{m} \right)$, and in the annulus $A\left( \frac{9}{10} C_1 \tilde{m}, C_1 \tilde{m} \right)\}$,

\item $E_2 = \{$the largest $p_1$-open cluster in the box $B\left( \frac{9}{10} C_1 \tilde{m} \right)$ has volume $\geq N \}$,

\item $E_3 = \{$the largest $p_1$-open cluster in $B\left( \frac{8}{10} C_1 \tilde{m} \right)$, the largest $p_1$-open cluster in $A\left( \frac{7}{10} C_1 \tilde{m}, C_1 \tilde{m} \right)$, and the largest $p_2$-open cluster in $B\left( C_2 \tilde{m} \right)$ all have volume $< N \}$.
\end{itemize}
Note that each of these events has probability tending to $1$ as $N \to \infty$: it follows from \eqref{est_L2}, combined with \eqref{exp_decay} (for $E_1$) and with \eqref{largest_cluster} (for $E_2$ and $E_3$).

We introduce now, for $C_3$, $C_4 > 0$ to be chosen later,
\begin{itemize}
\item $E_4 = \{$there is a $p_2$-open path from $B\left( C_3 L(p_2) \right)$ to $\partial B\left( C_1 \tilde{m} \right)$, and a $p_2$-open circuit in $A\left( C_3 L(p_2), 2 C_3 L(p_2) \right) \}$,

\item $E_5 = \{$there is a $p_1$-dual-open circuit in $A\left( C_4 L(p_2), C_3 L(p_2) \right) \}$.
\end{itemize}
It follows from \eqref{est_L1} that we can fix $C_3$ sufficiently large so that for $N$ large enough, $E_4$ occurs with probability at least $1 - \epsilon$, and then $C_4$ small enough to that for $N$ large enough, $E_5$ occurs with probability at least $1 - \epsilon$. Note that the two constants $C_3$ and $C_4$ are universal: they can be chosen independently of $N$ (and the precise choice of $\gamma$).

\begin{figure}
\begin{center}
\includegraphics[width=10cm]{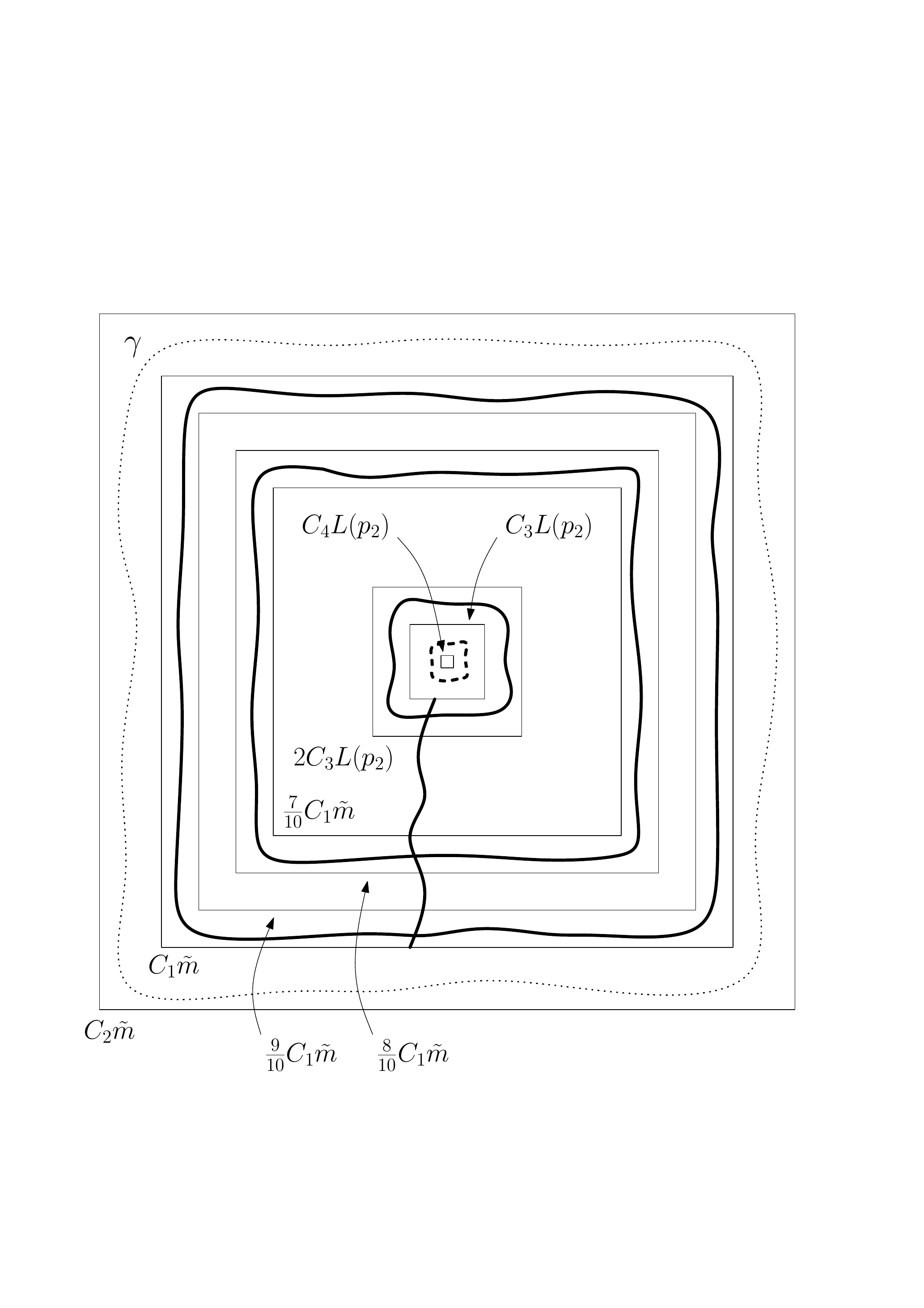}
\caption{\label{fig_thm2} This figure presents the construction used for the proof of Proposition \ref{prop3}: solid lines correspond to $p_2$-open paths, and the small circuit in dashed line is $p_1$-dual-open. Here, we need the ratio $C_3/C_4$ to be large enough.}
\end{center}
\end{figure}

We then make the following observations for the process in $\mathcal{D}(\gamma)$: if all these events $E_i$ ($1 \leq i \leq 5$) happen, then (no matter where the curve $\gamma$ is located exactly in $A(C_1 \tilde{m}, C_2 \tilde{m})$)
\begin{enumerate}[(i)]
\item the first time that vertices in the box $B\left( \frac{9}{10} C_1 \tilde{m} \right)$ freeze lies in the time interval $(p_2,p_1)$,

\item and if we look at the ``hole containing $0$'' in that frozen cluster, its boundary is a dual circuit contained in $A\left( C_4 L(p_2), 2 C_3 L(p_2) \right)$.
\end{enumerate}
This implies
$$\tilde{\Gamma}_N(C_1 \tilde{m}(N), C_2 \tilde{m}(N)) \cap \left( \bigcap_{1 \leq i \leq 5} E_i\right) \subseteq \tilde{\Gamma}_N(C_4 L(p_2), 2 C_3 L(p_2)).$$
Hence, using that $\limsup_{N \to \infty} \mathbb{P}\left(\left( \bigcap_{1 \leq i \leq 5} E_i\right)^c \right) \leq 2 \epsilon$, we get
$$\limsup_{N \to \infty} \mathbb{P}(\tilde{\Gamma}_N(C_1 \tilde{m}(N), C_2 \tilde{m}(N))) \leq \limsup_{N \to \infty} \mathbb{P}(\tilde{\Gamma}_N(C_4 L(p_2), 2 C_3 L(p_2))) + 2 \epsilon.$$
Now, we would like to apply the induction hypothesis to the right-hand side: for that, let us denote by $C^{(k)}$ the constant associated with $k$, $\epsilon$, and $0< C_4 < 2 C_3$. In order to be in a position to use the induction hypothesis, we need to show that $C^{(k+1)}$ can be chosen so as to ensure that
\begin{equation} \label{cond_ck}
C^{(k)} m_k(N) \leq C_4 L(p_2) \leq 2 C_3 L(p_2) \leq (C^{(k)})^{-1} m_{k+1}(N)
\end{equation}
for $N$ large enough. Indeed, this would then imply that
$$\limsup_{N \to \infty} \mathbb{P}(\tilde{\Gamma}_N(C_4 L(p_2), 2 C_3 L(p_2))) \leq \epsilon,$$
and complete the proof of Proposition \ref{prop3}. But \eqref{cond_ck} is satisfied for $C^{(k+1)}$ large enough: it follows immediately from \eqref{def_p2}, combined with \eqref{theta} and \eqref{apriori}.
\end{proof}

\bibliographystyle{plain}
\bibliography{FP_volume}

\end{document}